\newtheorem{theorem}{Theorem}[section]   
\newtheorem{corollary}[theorem]{Corollary}     
\newtheorem{lemma}[theorem]{Lemma}         
\theoremstyle{definition}
\newtheorem{definition}[theorem]{Definition}   
\theoremstyle{remark}
\newtheorem{example}[theorem]{Example}        
\newcommand{\lb}{\llbracket}
\newcommand{\rb}{\rrbracket}
\newcommand{\f}{\bar{f}}
\newcommand{\x}{\bar{x}}
\newcommand{\y}{\bar{y}}
\begin{document}

\title[Steady states of continuous and discrete models]{On the relationship of steady states of continuous and discrete models}
\author[Veliz-Cuba]{Alan Veliz-Cuba}
\author[Arthur]{Joseph Arthur}
\author[Hochstetler]{Laura Hochstetler}
\author[Klomps]{Victoria Klomps}
\author[Korpi]{Erikka Korpi}


\begin{abstract}
In this paper we provide theoretical results that relate steady states of continuous and discrete models arising from biology.
\end{abstract}
\maketitle

\section{Introduction}

Mathematical modeling has been used successfully to understand the behaviour of many biochemical networks, \cite{p53sode,p53ode,vondassow,santillan2008,p53lm,Velizlacop,mendoza_Tcell,albert}. Mathematical models can be used as a framework to identify the key parts of a biological system, to perform \textit{in silico} experiments and to understand the dynamical behaviour.

There are mainly two mathematical frameworks used in modeling biological systems. The continuous framework, usually constructed by coupling differential equations, \cite{p53sode,p53ode,vondassow,santillan2008}; and the discrete framework, usually constructed by coupling multistate functions (e.g. Boolean functions), \cite{p53lm,Velizlacop,mendoza_Tcell,albert}.

For many biological systems that have been modeled using both frameworks, it has been shown that both models have similar dynamical properties, \cite{p53sode,p53lm,vondassow,albert,santillan2008,Velizlacop}; this raises the question of when and why this happens. Furthermore, it has been hypothesized that the dynamics of biochemical networks are constrained by the topological structure of the wiring diagram, \cite{thomasbook}. This in turn suggests that mathematical models describing the same biochemical phenomena should have similar behaviour, even if they come from different frameworks. By providing answers to these questions we can gain substantial biological insight and understand how biology works at the system level.

This problem has been studied by several authors, \cite{kauffman-glass73,BtoC,mendoza,plinear}. Roughly speaking, the idea is that if a continuous model is ``sigmoidal enough'' and has the same qualitative features of a discrete model, then both models should have similar dynamics. In this paper we provide simple proofs about the relationship between steady states of continuous and discrete models satisfying these properties. Our results generalize previous results in \cite{BtoC,kauffman-glass73,mendoza,plinear}. Furthermore, our proofs also show why the relationship between continuous and discrete models is likely to occur even when continuous models are not very sigmoidal.

\section{Preliminaries}
\label{sec:pre}

For a biochemical network with $N$ species, a mathematical model is typically of the form $x(t)=(x_1(t),\ldots,x_N(t))$; where $x_i(t)$ denotes the concentration of species $i$ at time $t$. Given an initial condition $x(0)=x_0$, it is of interest to study the behaviour of $x(t)$ as $t$ increases. There is usually no closed form of $x_i(t)$ as a function of $t$ (and $x_0$), but rather equations that describe how $x_i$'s depend on each other.

A system of ordinary differential equations (ODE) that models a biochemical network is typically of the form $x_i'(t)=f_i(x(t))-d_i x_i(t)$, where $d_i$ is the elimination rate constant, corresponding to natural decay, and $f_i$ is a function that determines how the rate of change of $x_i$ at time $t$ depends on the value of the other species. We can represent this ODE model as $x'=f(x)-Dx$, where $f:\mathbb{R}^N\rightarrow\mathbb{R}^N$ and  $D$ is a diagonal matrix.

A multistate network or multivalued network (MN) that models a biochemical network is typically of the form  $x_i(t+1)=f_i(x(t))$ (notice that time is discrete); where $x_i$ can take values in the finite set $\lb 0,m_i\rb=\{0,1,\ldots,m_i\}$ and $f_i:\mathcal{S}=\lb 0,m_1\rb \times \lb 0,m_2\rb \times \ldots\times \lb 0,m_N\rb \rightarrow \lb 0,m_i\rb$ determines how the future value of species $i$ depends on the current value of the others. We can represent this MN model as $x(t+1)=f(x(t))$, where $f:\mathcal{S}\rightarrow\mathcal{S}$. A multistate network is also called a finite dynamical system. Examples of multistate networks include Boolean networks, logical models and polynomial dynamical systems \cite{Velizpoly}.

For any modeling framework, the wiring diagram or dependency graph is defined as the graph with $N$ vertices, $\{1,\ldots,N\}$, and an edge from $j$ to $i$ if $f_i$ depends on $x_j$. The edge is assigned a positive (negative) sign if $f_i$ is increasing (decreasing) with respect to $x_j$.

From now on we will use the notation $\bar{x}$ and $\bar{f}$ to refer to continuous entities; $x$ and $f$ will denote discrete entities. Also, $N$ will denote the dimension of functions.  $\|\ \|$ will denote the Euclidean norm for vectors and the Frobenius norm for matrices. We denote closed intervals by $[a,b]$, open intervals by $]a,b[$ and half-open intervals by $[a,b[$ and $]a,b]$.

Our goal is to relate the steady states of an ODE and a MN; that is, fixed points of $\f$ and $f$. We assume (rescaling if necessary) that the ODE has the form $\bar{x}'=D(\bar{f}(x)-\bar{x})$ where $D$ is a positive diagonal matrix and $\bar{f}:[0,1]^N\rightarrow [0,1]^N$. A MN is of the form $f:\mathcal{S}=\prod_{j=1}^N \lb 0,m_j\rb\rightarrow \mathcal{S}$.

\begin{example}\label{eg:toy}
Our toy example consists of a 2-dimensional ODE, $\x'=\f(\x)-\x$, and a 2-dimensional MN, $x(t+1)=f(x(t))$; where $\f:[0,1]^2\rightarrow [0,1]^2$ and $f:\lb 0,2\rb ^2\rightarrow \lb 0,2\rb^2$ are given by:

$$\f_1(\x)=.8\frac{\x_1^{n_1}}{.3^{n_1}+\x_1^{n_1}}+
.6\frac{\x_2^{n_2}}{.7^{n_2}+\x_2^{n_2}}\frac{.3^{n_3}}{.3^{n_3}+\x_1^{n_3}}$$
$$\f_2(\x)=.9\frac{\x_2^{n_4}}{.4^{n_4}+\x_2^{n_4}} +
.5\frac{\x_1^{n_5}}{.6^{n_5}+\x_1^{n_5}}\frac{.4^{n_6}}{.4^{n_6}+\x_2^{n_6}}$$ where $n=(n_1,\ldots,n_6)$ is a parameter. Also, $f$ is given as a truth table:

\begin{center}
$
\begin{array}{cc|cc}
x_1 & x_2 & f_1(x) & f_2(x)\\
\hline
0 & 0 & 0 & 0\\
0 & 1 & 0 & 2\\
0 & 2 & 1 & 2\\
1 & 0 & 2 & 0\\
1 & 1 & 2 & 2\\
1 & 2 & 2 & 2\\
2 & 0 & 2 & 1\\
2 & 1 & 2 & 2\\
2 & 2 & 2 & 2
\end{array} 
$
\end{center}

\end{example}

Notice that for our toy example $D$ is the identity matrix. We will refer to the discrete values 0, 1 and 2, as low, medium and high, respectively.

Figure \ref{fig:f12} shows plots for $\bar{f}_1$ and $\bar{f}_2$. We can see that $\bar{f}$ and $f$ have the same qualitative features. For example, when the value of both inputs is ``low'', the value of both outputs is low; when the value of both inputs is high, value of both outputs is high. For a better comparison, heat maps for all functions are shown in Figure \ref{fig:H1} and Figure \ref{fig:H2}. 

\begin{figure}[here]
\centerline{
\framebox{\includegraphics[height=4cm,width=8cm]{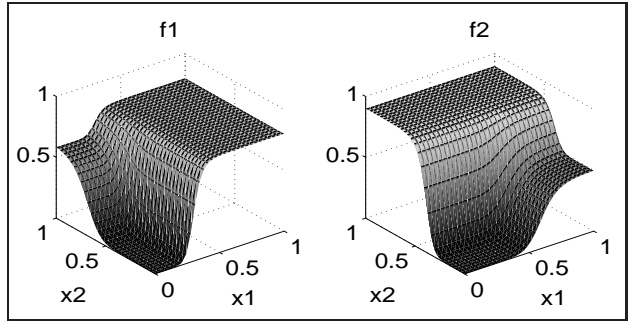}}
}
\caption{Graphs of functions $\f_1$ (left) and $\f_2$ (right)  from Example \ref{eg:toy}. All exponents $n_i$'s have been set equal to 10 for an easier comparison with the MN.}
\label{fig:f12}
\end{figure}

\begin{figure}[here]
\centerline{
\framebox{\includegraphics[height=3cm,width=3.5cm]{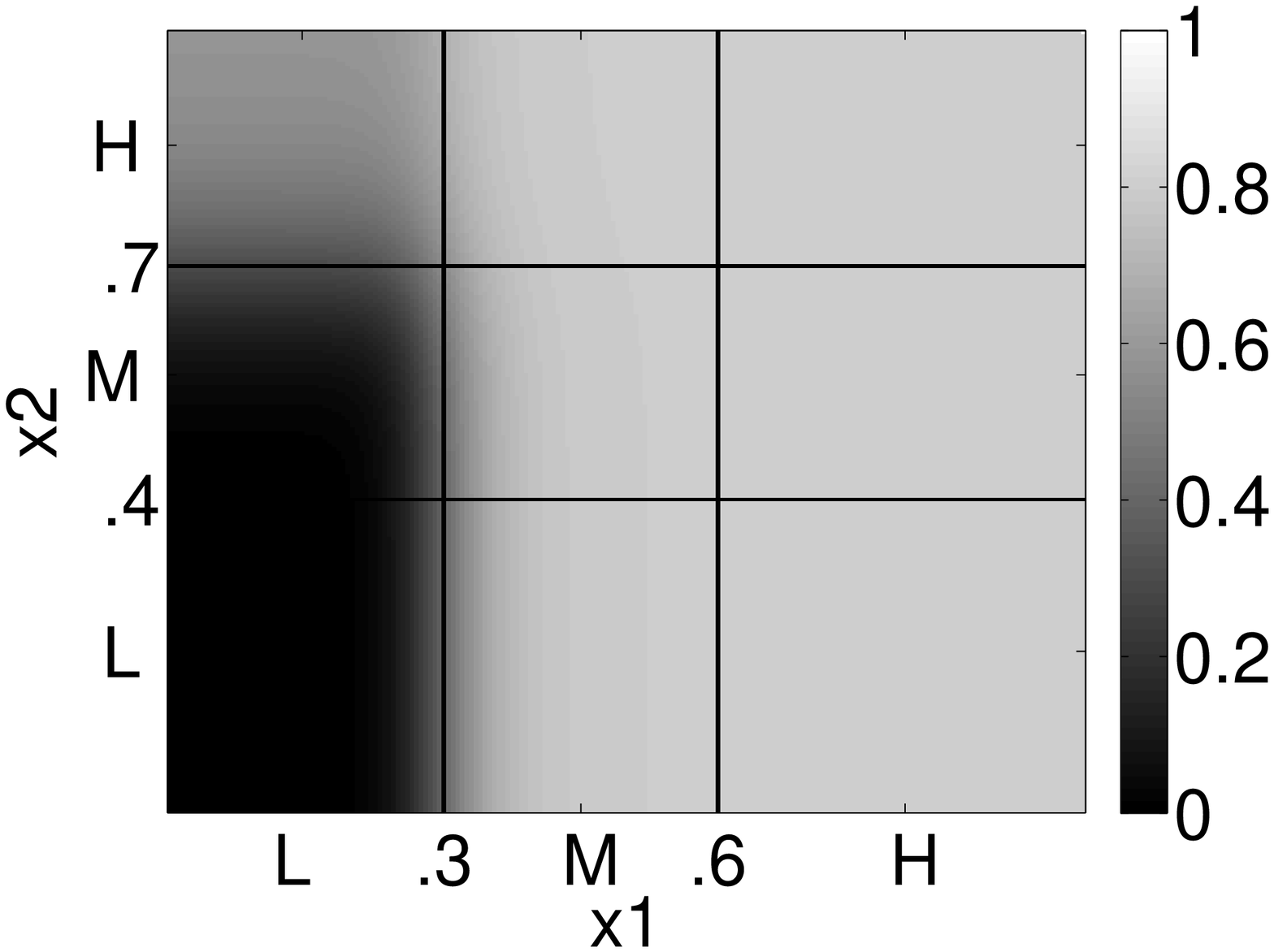} \ \ \includegraphics[height=3cm,width=3.5cm]{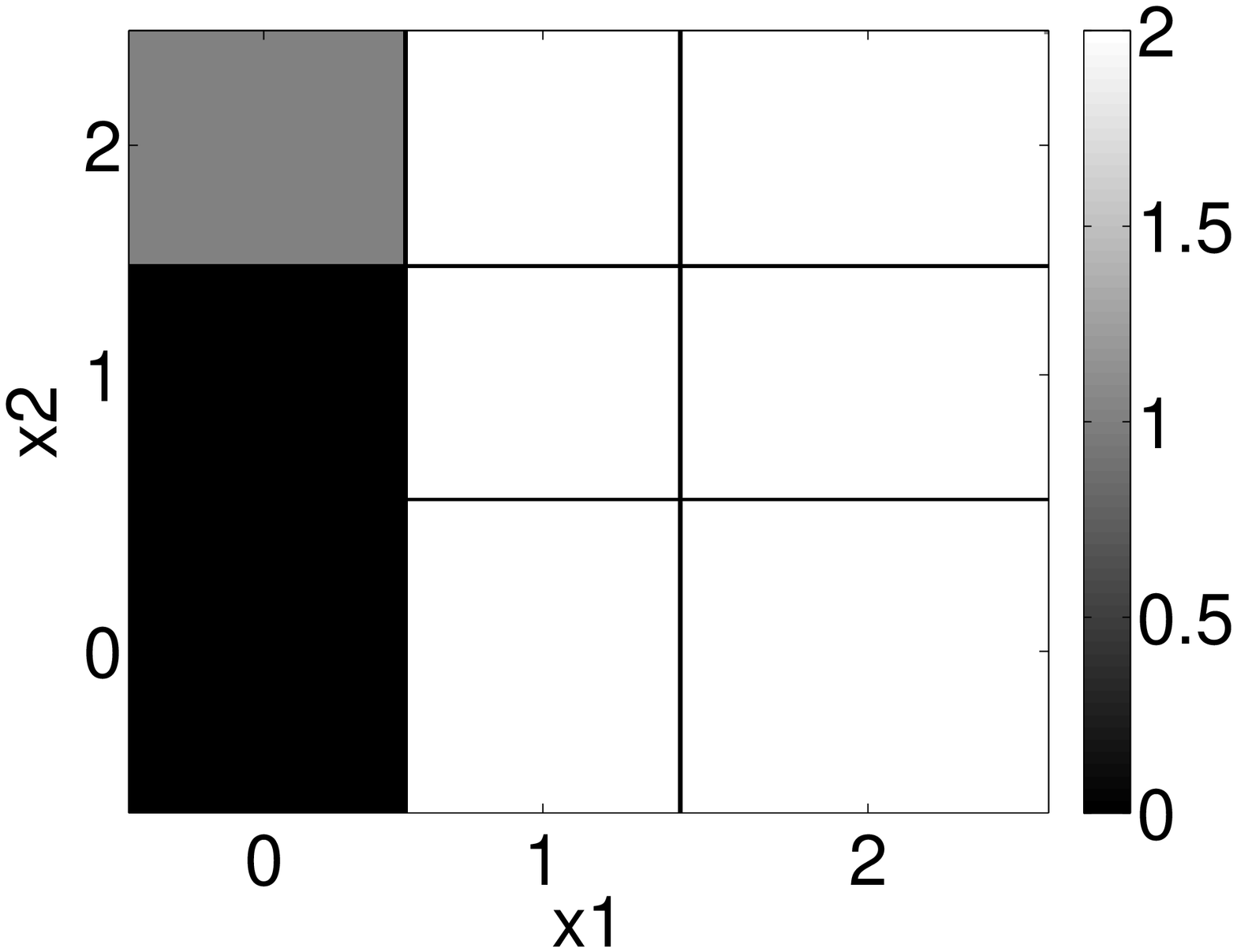}}
}
\caption{Heat maps for function $\f_1$ (left) and $f_1$ (right) from Example \ref{eg:toy}. L, M and H denote low, medium and high, respectively.}
\label{fig:H1}
\end{figure}

\begin{figure}[here]
\centerline{
\framebox{\includegraphics[height=3.3cm,width=3.5cm]{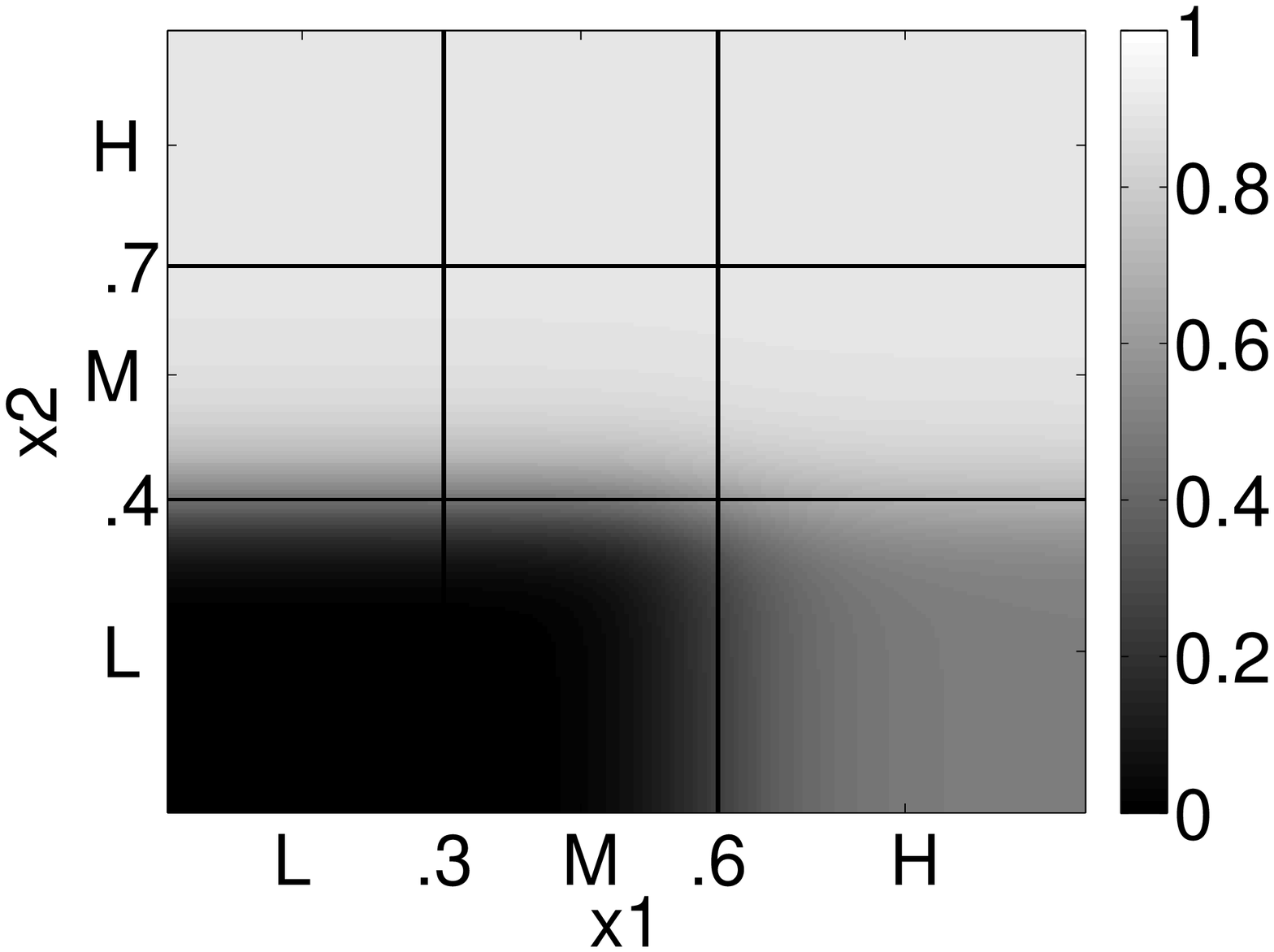} \ \ \includegraphics[height=3.3cm,width=3.5cm]{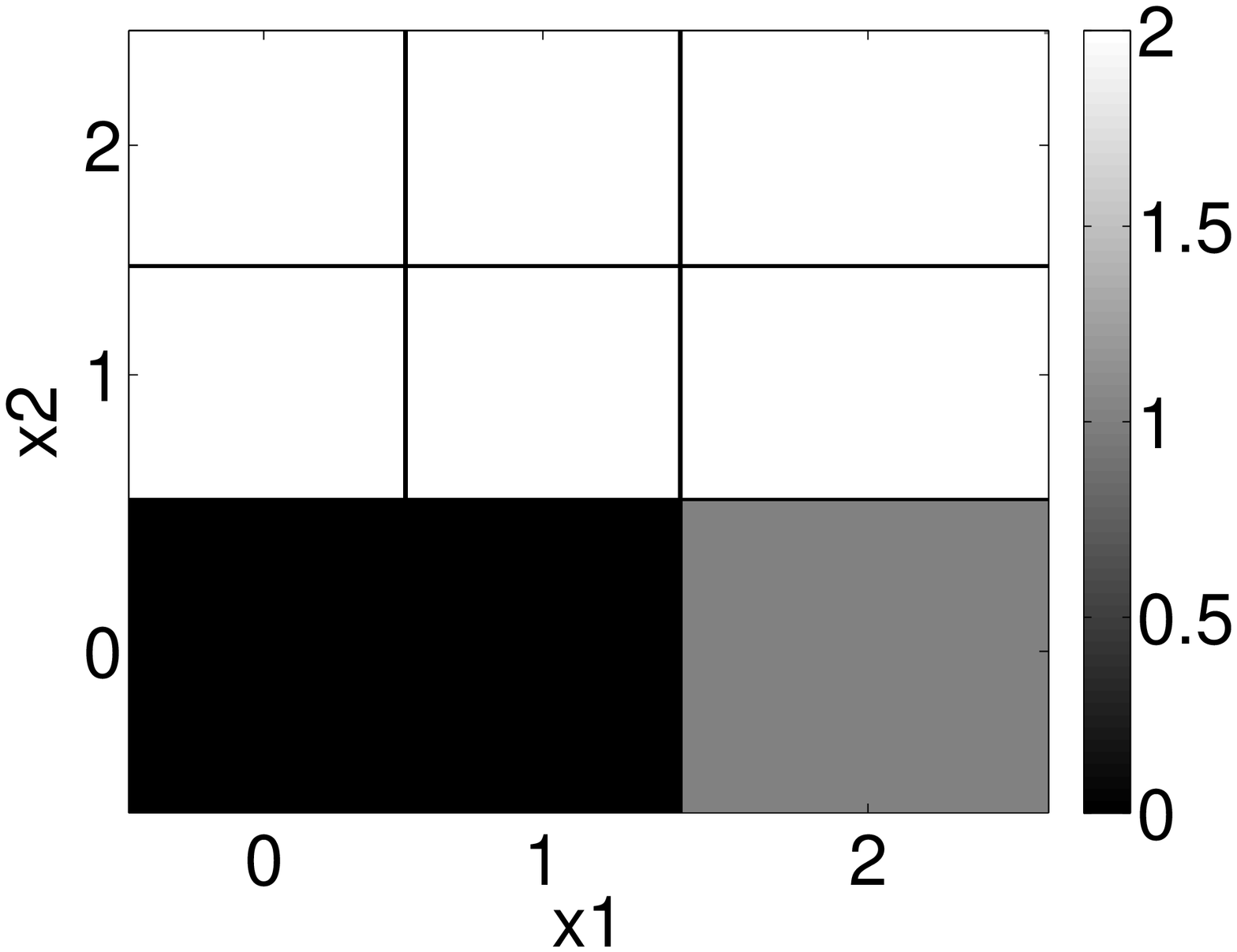}}
}
\caption{Heat maps for function $\f_2$ (left) and $f_2$ (right) from Example \ref{eg:toy}.}
\label{fig:H2}
\end{figure}

Notice how the heat maps have the same qualitative features. The heat maps also show how to divide the domain of $\f$ to capture its qualitative behaviour. For example, the values of $\x_1$ can be separated in three regions: [0,.3[ (low), ].3,.6[ (medium) and ].6,1] (high). Similarly, the values of $\x_2$ can be separated in three regions: [0,.4[ (low), ].4,.7[ (medium) and ].7,1] (high).

\begin{definition}
Consider $k=(k_{1},k_{2},\ldots,k_{m})\in (0,1)^m$ such that $0<k_{1}<k_{2}<\ldots<k_{m}<1$. For each element of $\lb 0,m\rb =\{0,1,\ldots,m\}$ we define $\pmb{[}0\pmb{]}_k=[0,k_1[$, $\pmb{[}1\pmb{]}_k=]k_1,k_2[,\ldots,\pmb{[}m-1\pmb{]}_k=]k_{m-1},k_{m}[$, $\pmb{[}m\pmb{]}_k=]k_m,1]$. The subscript will be omitted if $k$ is understood from the context. The numbers $k_i$'s will be called \textit{thresholds}. 

Now consider $k^i\in (0,1)^{m_i}$ for $i=1,2,\ldots,N$. For each $x\in\prod_{i=1}^n\lb 0,m_i\rb$ we define $\pmb{[}x\pmb{]}=\pmb{[}x_1\pmb{]}_{k^1}\times \pmb{[}x_2\pmb{]}_{k^2}\times\ldots\times \pmb{[}x_N\pmb{]}_{k^N}$. The sets $\pmb{[}x\pmb{]}$'s will be called \textit{regions}. 
\end{definition}

For instance, from Example \ref{eg:toy} we got the thresholds $k^1=(.3,.6)$ for the first variable and $k^2=(.4,.7)$ for the second variable. Then, $\pmb{[}00\pmb{]}=[0,.3[\times[0,.4[$, $\pmb{[}01\pmb{]}=[0,.3[\times].4,.7[$, $\pmb{[}02\pmb{]}=[0,.3[\times].7,1]$ and so on. All regions are shown in Figure \ref{fig:regions}.

\begin{figure}[here]
\centerline{
\framebox{ \includegraphics[height=4cm]{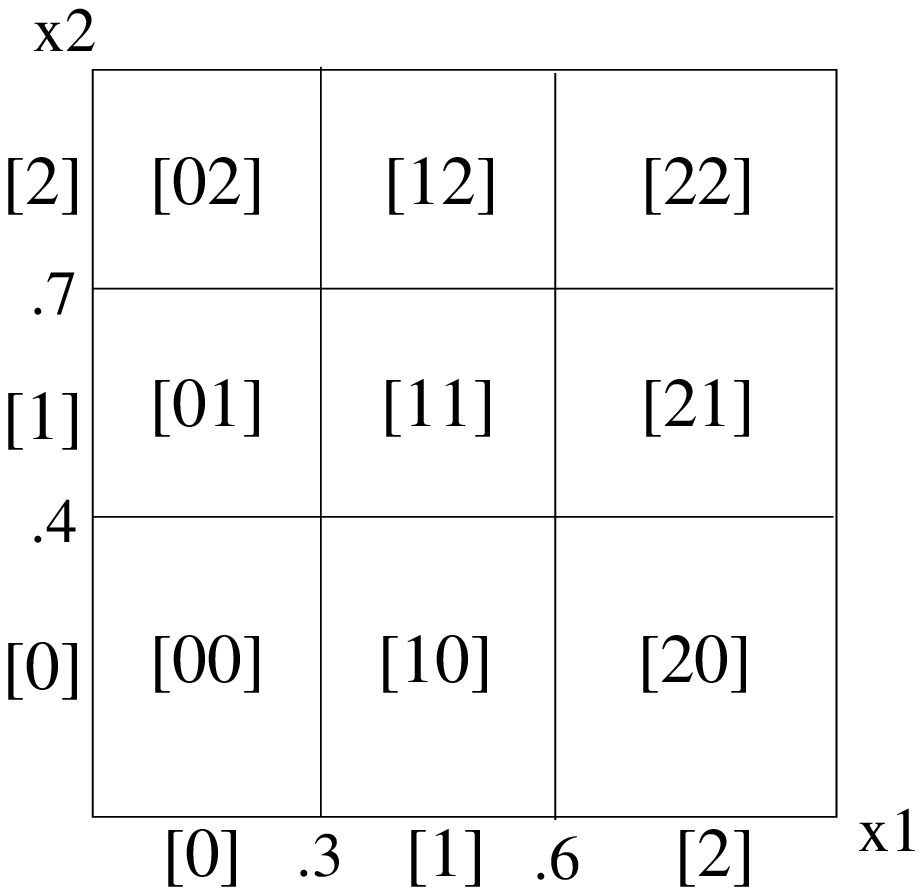}}
}
\caption{Regions for the domain of $\f$ in Example \ref{eg:toy}.}
\label{fig:regions}
\end{figure}

\section{Results}
\label{sec:res}

In this section we will prove that under certain conditions, there is a one-to-one correspondence between the steady states of ODEs and MNs.

We first need the following lemmas.

\begin{lemma}\label{lemma:existence}
Let $\f:[0,1]^N\rightarrow [0,1]^N$ be a continuous function. If $K$ is a convex compact set such that $\f(K)\subseteq K$, then the ODE $\x'=D(\f(\x)-\x)$ has a steady state in $K$.
\end{lemma}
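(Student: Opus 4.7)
The plan is to reduce the statement about steady states of the ODE to a fixed-point statement about $\f$, and then invoke Brouwer's fixed-point theorem. A point $\x^*$ is a steady state of $\x'=D(\f(\x)-\x)$ precisely when $D(\f(\x^*)-\x^*)=0$. Since $D$ is a positive diagonal matrix, it is invertible, so this equation is equivalent to $\f(\x^*)=\x^*$. Hence, finding a steady state of the ODE inside $K$ is exactly the same as finding a fixed point of $\f$ inside $K$.

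With this reformulation in hand, the hypotheses of the lemma provide exactly what Brouwer's fixed-point theorem requires. The set $K$ is convex and compact in $\mathbb{R}^N$, hence homeomorphic to a closed ball of the appropriate dimension, and the restriction $\f|_K$ is continuous (as $\f$ is continuous on $[0,1]^N \supseteq K$) and maps $K$ into itself by assumption. Brouwer's theorem then yields a point $\x^* \in K$ with $\f(\x^*) = \x^*$, which is the desired steady state.

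The only thing worth verifying carefully is the reduction: one must use that $D$ is a \emph{positive} diagonal matrix (as stipulated in Section~\ref{sec:pre}) so that $Dv=0 \iff v=0$. This rules out any spurious "steady states" coming from the kernel of $D$, and confirms that the ODE steady-state condition is genuinely equivalent to the fixed-point condition. There is no real obstacle here; the lemma is essentially a restatement of Brouwer's theorem adapted to the form of ODE used throughout the paper, and its role is simply to serve as a convenient tool for the subsequent existence arguments.
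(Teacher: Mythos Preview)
Your proof is correct and follows exactly the approach of the paper, which simply states that the lemma is a direct application of Brouwer's fixed point theorem. You have merely spelled out the details---that $D$ being positive diagonal makes the steady-state condition equivalent to $\f(\x^*)=\x^*$, and that the hypotheses on $K$ are precisely those needed for Brouwer---which the paper leaves implicit.
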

\begin{proof}
It is a direct application of Brouwer's fixed point theorem.
\end{proof}

\begin{lemma}\label{lemma:uniqueness}
Let $\f:[0,1]^N\rightarrow [0,1]^N$ be a differentiable function. If $K$ is a convex subset of $[0,1]^N$, and $\|\f'(\x) \| \leq \epsilon<1$ for all $\x\in K$, then the ODE $\x'=D(\f(\x)-\x)$ has at most one steady state in $K$.
\end{lemma}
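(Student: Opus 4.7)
The plan is to recognize that steady states of the ODE $\bar{x}' = D(\bar{f}(\bar{x}) - \bar{x})$ are exactly the fixed points of $\bar{f}$, since $D$ is a positive diagonal matrix and hence invertible. So the claim reduces to showing $\bar{f}$ has at most one fixed point in $K$.

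To do this, I would use the mean value inequality on the convex set $K$ together with the derivative bound to show that $\bar{f}$ is a contraction on $K$. Explicitly, for any $\bar{x},\bar{y}\in K$, convexity gives the segment $\{\bar{x}+t(\bar{y}-\bar{x}) : t\in[0,1]\}\subseteq K$, so applying the fundamental theorem of calculus to $g(t)=\bar{f}(\bar{x}+t(\bar{y}-\bar{x}))$ yields
\[
\bar{f}(\bar{y})-\bar{f}(\bar{x}) = \int_0^1 \bar{f}'(\bar{x}+t(\bar{y}-\bar{x}))(\bar{y}-\bar{x})\,dt.
\]
Taking Euclidean norms, using that the operator norm is dominated by the Frobenius norm, and invoking the hypothesis $\|\bar{f}'\|\le\epsilon$ on $K$ then gives
\[
\|\bar{f}(\bar{y})-\bar{f}(\bar{x})\| \le \epsilon\,\|\bar{y}-\bar{x}\|.
\]

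Now if $\bar{x}$ and $\bar{y}$ are both fixed points of $\bar{f}$ in $K$, substituting $\bar{f}(\bar{x})=\bar{x}$ and $\bar{f}(\bar{y})=\bar{y}$ into the contraction inequality yields $\|\bar{y}-\bar{x}\|\le \epsilon\|\bar{y}-\bar{x}\|$, i.e.\ $(1-\epsilon)\|\bar{y}-\bar{x}\|\le 0$. Since $\epsilon<1$, this forces $\bar{x}=\bar{y}$, proving uniqueness.

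There is no real obstacle here; the only subtlety worth noting is the compatibility of the matrix norm used in the hypothesis with the Euclidean vector norm inside the mean value inequality. The Frobenius norm is used in the paper's conventions, and since it dominates the spectral (operator) norm, the estimate $\|\bar{f}'(z)(\bar{y}-\bar{x})\|\le \|\bar{f}'(z)\|\,\|\bar{y}-\bar{x}\|$ goes through without modification.
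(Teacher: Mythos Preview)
Your proof is correct and follows essentially the same approach as the paper: reduce steady states to fixed points of $\bar{f}$, use the derivative bound on the convex set $K$ to obtain the Lipschitz estimate $\|\bar{f}(\bar{x})-\bar{f}(\bar{y})\|\le\epsilon\|\bar{x}-\bar{y}\|$, and conclude uniqueness from $\epsilon<1$. The paper simply asserts the Lipschitz inequality in one line, whereas you spell out the mean value argument and the norm compatibility; your version is more complete but not different in substance.
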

\begin{proof}
If $\x,\y\in K$ are steady states of $\x'=D(\f(\x)-\x)$, then $\f(\x)=\x$ and $\f(\y)=\y$. Also, $\|\x-\y\|=\|\f(\x)-\f(\y)\|\leq \epsilon \|\x-\y\|$; it follows that $\x=\y$.
\end{proof}

\begin{lemma}\label{lemma:stability}
Let $\f:[0,1]^N\rightarrow [0,1]^N$ be a differentiable function. If $\|\f'\|<\frac{\min\{D_{ii}\}}{\sqrt{N}\|D\|}$ on $K$, then any steady state of $\x'=D(\f(\x)-\x)$ in $K$ is asymptotically stable.
\end{lemma}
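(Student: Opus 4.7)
The plan is to exhibit $V(\x) = \tfrac{1}{2}\|\x - \x^*\|^{2}$ as a strict Lyapunov function on a neighborhood of any steady state $\x^* \in K$, which will imply local asymptotic stability.

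First I would set $u = \x - \x^*$, use $\f(\x^*) = \x^*$, and differentiate $V$ along trajectories of $\x' = D(\f(\x) - \x)$ to obtain
\[
\dot V \;=\; u^{T} D\bigl(\f(\x) - \f(\x^*)\bigr) \;-\; u^{T} D u.
\]
The dissipative term is immediate: since $D$ is positive diagonal, $u^{T} D u = \sum_i D_{ii}u_i^{2} \geq \min\{D_{ii}\}\,\|u\|^{2}$. For the cross term I would apply Cauchy--Schwarz together with the standard Frobenius--Euclidean inequality $\|Dw\| \leq \|D\|\,\|w\|$ to get $|u^{T} D(\f(\x) - \f(\x^*))| \leq \|u\|\,\|D\|\,\|\f(\x) - \f(\x^*)\|$.

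Next I would estimate $\|\f(\x) - \f(\x^*)\|$ coordinate by coordinate. By the mean value theorem applied along the segment from $\x^*$ to $\x$ (which lies in $K$ for $\x$ close enough to $\x^*$, using convexity of a small ball in $K$), $|f_i(\x) - f_i(\x^*)| \leq \sup \|\nabla f_i\|\,\|u\|$; and since each row of $\f'$ has Euclidean norm at most the Frobenius norm $\|\f'\|$, this is at most $\sup_K \|\f'\|\,\|u\|$. Squaring, summing over the $N$ coordinates, and taking square roots gives the key bound $\|\f(\x) - \f(\x^*)\| \leq \sqrt{N}\,\sup_K \|\f'\|\,\|u\|$. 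Combining everything,
\[
\dot V \;\leq\; \bigl(\sqrt{N}\,\|D\|\,\|\f'\| - \min\{D_{ii}\}\bigr)\,\|u\|^{2},
\]
which the hypothesis $\|\f'\| < \min\{D_{ii}\}/(\sqrt{N}\,\|D\|)$ makes strictly negative whenever $u \neq 0$. Standard Lyapunov theory then delivers asymptotic stability of $\x^*$.

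The only real work is the norm bookkeeping: the factor $\sqrt{N}\,\|D\|/\min\{D_{ii}\}$ in the statement is precisely what comes out of the coordinatewise mean value estimate when combined with the paper's convention of using Frobenius norms for all matrices. A sharper estimate employing operator norms would shave off the $\sqrt{N}$, but the stated hypothesis is tailor-made for the Lyapunov estimate above, so I expect no genuine obstacle beyond assembling the inequalities. As an alternative route, one could linearize at $\x^*$ and show that every eigenvalue of the Jacobian $D\f'(\x^*) - D$ has negative real part by the same style of norm argument, but the Lyapunov approach is cleaner and avoids invoking Hartman--Grobman.
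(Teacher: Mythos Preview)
Your Lyapunov argument is correct, but it is not the route the paper takes. The paper linearizes at the steady state and applies the Gershgorin circle theorem to the Jacobian $B-D$ with $B=D\f'(\x^*)$: for some $k$ one has $|\lambda+D_{kk}|\leq \sum_j|B_{kj}|$, and Cauchy--Schwarz gives $\sum_j|B_{kj}|\leq \sqrt{N}\,\|B\|\leq \sqrt{N}\,\|\f'(\x^*)\|\,\|D\|<\min\{D_{ii}\}\leq D_{kk}$, forcing $\mathrm{Re}\,\lambda<0$. In other words, the paper's proof is precisely the ``alternative route'' via eigenvalues that you mention and set aside in your last paragraph. The two arguments manufacture the factor $\sqrt{N}$ in different places: yours from the coordinatewise mean value estimate, the paper's from bounding an $\ell^1$ row sum by $\sqrt{N}$ times the Frobenius norm. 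The Gershgorin approach has the minor advantage of using the derivative bound only at the single point $\x^*$, whereas your estimate needs it on a neighborhood (your parenthetical about a ``small ball in $K$'' implicitly assumes either that $\x^*$ is interior to $K$ or that $\f'$ is continuous so the strict inequality extends); conversely, your argument is self-contained, yields an explicit decay rate for $V$, and does not invoke the linearization principle.
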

\begin{proof}
Suppose that $\x\in K$ is a steady state of $\x'=D(\f(\x)-\x)$ and denote $B=D\f'(\x)$. Notice that the Jacobian matrix of $D(\f-\x)$ is $B-D$. Suppose $\lambda$ is an eigenvalue of $B-D$. Then, by the Gershgorin circle theorem, there is $k$ such that $|\lambda-(B_{kk}-D_{kk})|\leq \sum_{j\neq k}|B_{kj}-D_{kj}|$. Since $D$ is a diagonal matrix, we obtain $|\lambda-(B_{kk}-D_{kk})|\leq \sum_{j\neq k}|B_{kj}|$. It follows that $|\lambda+D_{kk}|\leq \sum|B_{kj}|\leq \sqrt{N}\|B\|$. Then, $|\lambda+D_{kk}|$ $\leq \sqrt{N}\|B\|=\sqrt{N}\|\f'(\x)D\|$ $\leq \sqrt{N}\|\f'(\x)\|\|D\|$ $<\min\{D_{ii}\}$ $\leq D_{kk}$; thus we obtain $|\lambda+D_{kk}|<D_{kk}$. It follows that $\lambda$ must have negative real part. Therefore, $\x$ is an asymptotically stable steady state.
\end{proof}

Using the lemmas above we can easily prove the following theorem that relates steady states of continuous and discrete networks.   Condition (1) states that the continuous and discrete functions have the same qualitative behaviour (for $n$ large enough). Condition (2) states that the continuous function is ``sigmoidal enough''. We use $n$ to denote a vector parameter and limits refer to all entries of $n$ going to $\infty$; for instance, in Example \ref{eg:toy} $n=(n_1,\ldots,n_6)$ and limits refer to $n_i\rightarrow\infty$ for $i=1,2,\ldots,6$. The parameter $n$ controls how sigmoidal the function is.

\begin{theorem}\label{thm:main}
Let $(\f^n)$ be a family of continuous functions from $[0,1]^N$ to itself and let $f:\mathcal{S}\rightarrow \mathcal{S}$ be a MN. Consider the following conditions:

\begin{itemize}
\item[(1)] $\f^n\xrightarrow{unif} L_x\in \pmb{[}f(x)\pmb{]}$ on compact subsets of $\pmb{[}x\pmb{]}$, for all $x\in\mathcal{S}$.
\item[(2)] ${\f}^{n'} \xrightarrow{unif} 0$ on compact subsets of $\pmb{[}x\pmb{]}$, for all $x\in\mathcal{S}$.
\end{itemize}

Now, for each $x\in\mathcal{S}$ consider a convex compact set $K_x\subseteq \pmb{[}x\pmb{]}$ such that $L_x$ is an interior point of $K_{f(x)}$ (with the topology inherited from $[0,1]^N$) and let $K=\bigcup_{x\in\mathcal{S}}K_x$. 

If \emph{(1)} holds, then for $n$ large enough there is a one-to-one correspondence between steady states in $K$ of the ODE $\x'=D(\f^n(\x)-\x)$ and  $f$. Furthermore, there is a steady state in $K_x$ if and only if $x$ is a steady state of $f$. Also, if $\x^n\in K_x$ is the steady state of $\f^n$, we have $\x^n\rightarrow L_x$.
Additionally, if \emph{(2)} holds, such steady states are unique and asymptotically stable. 

\end{theorem}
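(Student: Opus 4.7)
My plan is to reduce each claim to one of the three lemmas just established, exploiting the fact that $\mathcal{S}$ is finite and that the regions $\pmb{[}x\pmb{]}$ for distinct $x\in\mathcal{S}$ are pairwise disjoint. This disjointness transfers to the $K_x$'s, so every steady state in $K$ lies in exactly one $K_x$, and the global correspondence reduces to a local statement indexed by $x$: \emph{$K_x$ contains a steady state of the ODE if and only if $f(x)=x$}.

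The pivotal step is to show that for $n$ large enough, $\f^n(K_x)\subseteq K_{f(x)}$ for every $x\in\mathcal{S}$. Since $L_x$ is an interior point of $K_{f(x)}$ in the topology of $[0,1]^N$, some open ball $B(L_x,r)$ is contained in $K_{f(x)}$; by condition (1) and the compactness of $K_x$, the uniform convergence $\f^n\to L_x$ on $K_x$ eventually pushes $\f^n(K_x)$ inside $B(L_x,r)\subseteq K_{f(x)}$. Taking the maximum of these thresholds over the finite set $\mathcal{S}$ yields a single $n$ that works simultaneously for all $x$.

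From here the conclusions follow quickly. If $f(x)=x$, then $\f^n(K_x)\subseteq K_x$ and Lemma \ref{lemma:existence} produces a steady state $\x^n\in K_x$. If $f(x)\neq x$, then $\pmb{[}x\pmb{]}\cap \pmb{[}f(x)\pmb{]}=\emptyset$ forces $K_{f(x)}\cap K_x=\emptyset$, so $\f^n(\x)\in K_{f(x)}$ cannot equal $\x\in K_x$, ruling out steady states in $K_x$. For convergence, any steady state $\x^n\in K_x$ satisfies
\[ \|\x^n-L_x\|=\|\f^n(\x^n)-L_x\|\leq \sup_{\bar{y}\in K_x}\|\f^n(\bar{y})-L_x\|\longrightarrow 0 \]
by (1). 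Under (2), choose $n$ large enough that on every $K_x$ both $\|\f^{n'}\|<1$ and $\|\f^{n'}\|<\min\{D_{ii}\}/(\sqrt{N}\|D\|)$ hold; Lemmas \ref{lemma:uniqueness} and \ref{lemma:stability} then deliver uniqueness and asymptotic stability in each $K_x$, completing the point-wise bijection.

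The one step that feels genuinely delicate is the invariance $\f^n(K_x)\subseteq K_{f(x)}$, and this is where the interior-point hypothesis on $L_x$ earns its keep: without it the limit $L_x$ could lie on the boundary of $K_{f(x)}$ and be pushed out of $K_{f(x)}$ by arbitrarily small perturbations, blocking the application of Brouwer's theorem to $K_x$ in the case $f(x)=x$ and also weakening the disjointness argument used to rule out spurious steady states when $f(x)\neq x$. Everything else is bookkeeping made uniform by the finiteness of $\mathcal{S}$.
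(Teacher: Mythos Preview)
Your proof is correct and follows essentially the same route as the paper: first use condition (1) together with the interior-point hypothesis and finiteness of $\mathcal{S}$ to obtain $\f^n(K_x)\subseteq K_{f(x)}$ for all $x$ and all large $n$, then invoke Lemma~\ref{lemma:existence} when $f(x)=x$ and the disjointness of $\pmb{[}x\pmb{]}$ and $\pmb{[}f(x)\pmb{]}$ when $f(x)\neq x$, deduce $\x^n\to L_x$ from uniform convergence, and finally appeal to Lemmas~\ref{lemma:uniqueness} and~\ref{lemma:stability} under condition (2). Your write-up is slightly more explicit than the paper's (spelling out the ball $B(L_x,r)$ and the role of finiteness), and your two separate bounds on $\|\f^{n'}\|$ are redundant since $\min\{D_{ii}\}/(\sqrt{N}\|D\|)\leq 1$, but these are cosmetic differences only.
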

	
\begin{proof}
Suppose (1) holds. 
Since $\f^n\xrightarrow{unif} L_x$ on $K_x$ and $L_x$ is an interior point of $K_{f(x)}$ for all $x\in\mathcal{S}$, there exists $n_0$ such that for all $n\geq n_0$ and for all $x\in\mathcal{S}$, $\f^n(K_x)\subseteq K_{f(x)}$. If $x$ is a steady state of $f$, that is $f(x)=x$, then $\f^n(K_x)\subseteq K_x$; by Lemma \ref{lemma:existence} there exists a steady state of the ODE in $K_x$. On the other hand, if $x$ is not a steady state of $f$, then $K_x\cap \f^n(K_x)\subseteq K_x\cap K_{f(x)}\subseteq \pmb{[}x\pmb{]}\cap\pmb{[}f(x)\pmb{]}=\{\}$ and there cannot be a steady state in $K_x$.

Now, for $x\in \mathcal{S}$ such that $f(x)=x$, denote with $\x^n$ a steady state of $\f^n$ in $K_x$. Then, since $\f^n\xrightarrow{unif} L_x$ on $K_x$, we have that $\x^n=\f^n(\x^n)\rightarrow L_x$.

Now suppose that (2) also holds.
Consider $\epsilon<\frac{\min\{D_{ii}\}}{\sqrt{N}\|D\|}\leq 1$ Since ${\f}^{n'} \xrightarrow{unif} 0$ on $K_x$, there exists $n_1\geq n_0$ such that for all $n\geq n_1$, $\|\f^{n'}\|<\epsilon$ on $K$. Then, by Lemma \ref{lemma:uniqueness} and \ref{lemma:stability}, the steady state must be unique and asymptotically stable.
\end{proof}

\begin{example}\label{eg:toy2} It is straightforward to check that the function in Example \ref{eg:toy} satisfies the following ($K$ denotes a compact set):

$$\f\xrightarrow[\textrm{on }K\subseteq {\pmb{[}}x{\pmb{]}} ]{unif} L_x=\begin{cases}
( \ 0,\ 0) \in \pmb{[}00\pmb{]}=\pmb{[}f(00)\pmb{]}    , & \pmb{[}x\pmb{]}= \pmb{[}00\pmb{]} \\
(\ 0,.9)   \in \pmb{[}02\pmb{]}=\pmb{[}f(01)\pmb{]}    , & \pmb{[}x\pmb{]}= \pmb{[}01\pmb{]} \\
(.6,.9)    \in \pmb{[}12\pmb{]}=\pmb{[}f(02)\pmb{]}    , & \pmb{[}x\pmb{]}= \pmb{[}02\pmb{]} \\
(.8,\ 0)   \in \pmb{[}20\pmb{]}=\pmb{[}f(10)\pmb{]}    , & \pmb{[}x\pmb{]}= \pmb{[}10\pmb{]} \\
(.8,.9)    \in \pmb{[}22\pmb{]}=\pmb{[}f(11)\pmb{]}    , & \pmb{[}x\pmb{]}= \pmb{[}11\pmb{]} \\
(.8,.9)    \in \pmb{[}22\pmb{]}=\pmb{[}f(12)\pmb{]}    , & \pmb{[}x\pmb{]}= \pmb{[}12\pmb{]} \\
(.8,.5)    \in \pmb{[}21\pmb{]}=\pmb{[}f(20)\pmb{]}    , & \pmb{[}x\pmb{]}= \pmb{[}20\pmb{]} \\
(.8,.9)    \in \pmb{[}22\pmb{]}=\pmb{[}f(21)\pmb{]}    , & \pmb{[}x\pmb{]}= \pmb{[}21\pmb{]} \\
(.8,.9)    \in \pmb{[}22\pmb{]}=\pmb{[}f(22)\pmb{]}    , & \pmb{[}x\pmb{]}= \pmb{[}22\pmb{]} 
\end{cases} 
$$
Also, it is easy to see that $\f'\xrightarrow{unif} 0$ (as $n\rightarrow\infty$) on compact subsets of any region. Therefore, Theorem \ref{thm:main} guarantees that for $n$ large enough we have a one to one correspondence between steady states and they are asymptotically stable. Since 00 and 22 are the steady states of $f$, the ODE has two stable steady states. For $n=(2,2,2,2,2,2)$ we obtain the steady states $(0,0)\in \pmb{[}00 \pmb{]}$ and $(.73,.78)\in \pmb{[}22\pmb{]}$, shown in Figure \ref{fig:dyn}. The phase portrait of $f$ is constructed by placing an arrow from $x\in\mathcal{S}$ to $y\in\mathcal{S}$ if $f(x)=y$; steady states are denoted by dots.
\end{example}

\begin{figure}[here]
\centerline{
\framebox{\includegraphics[height=5cm]{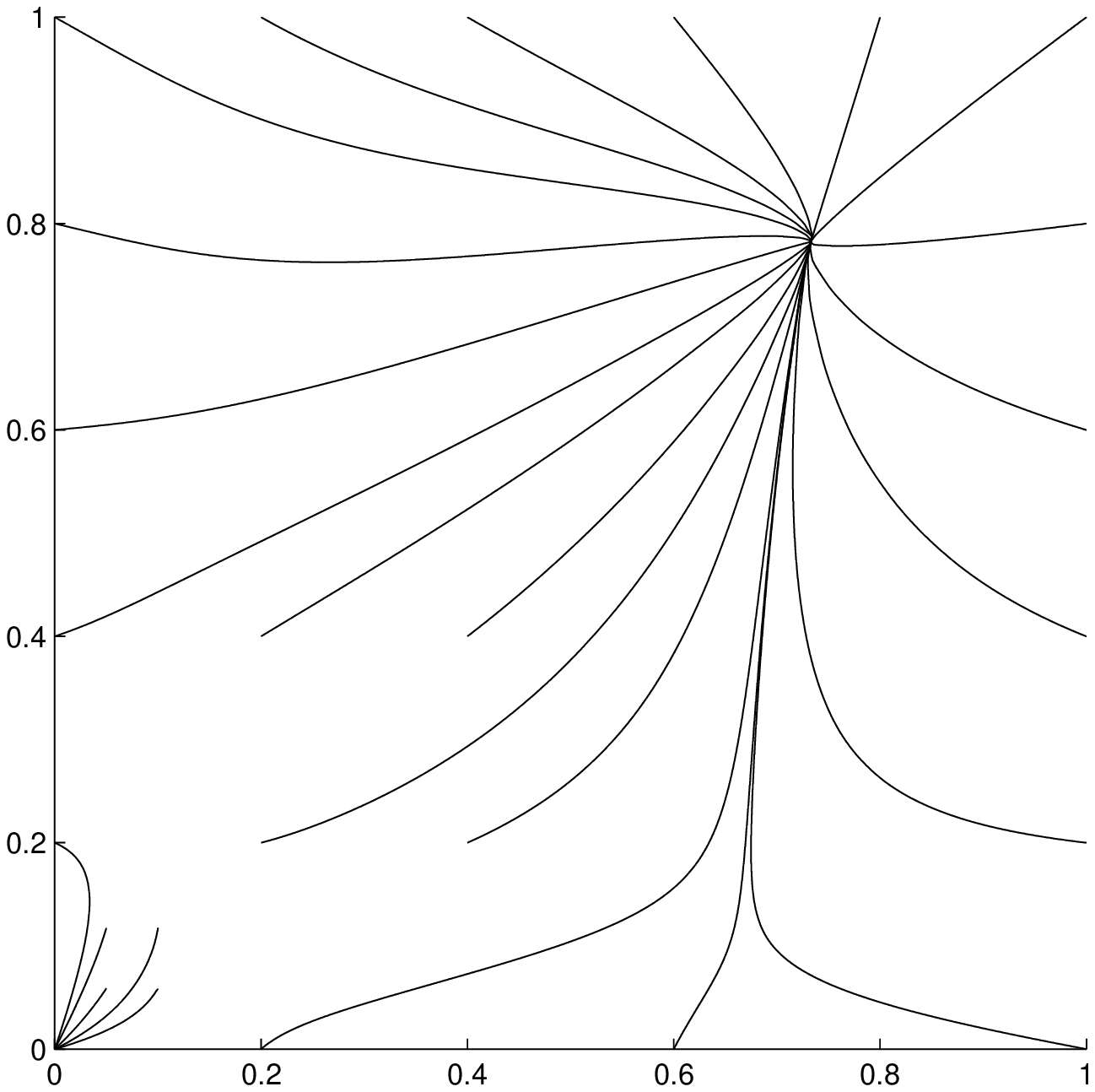} \ \includegraphics[height=5cm]{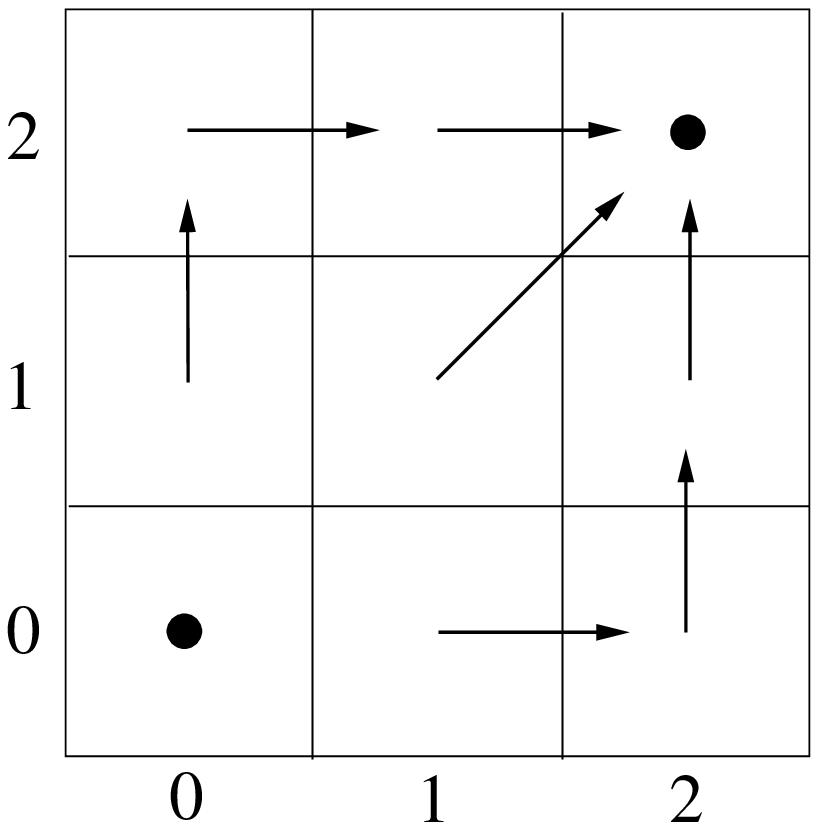}}
}
\caption{Phase portrait of $\f$ (left) and $f$ (right). Steady states of $f$ are denoted by dots.}
\label{fig:dyn}
\end{figure}

It is important to mention that Theorem \ref{thm:main} generalizes previous results. For example, by restricting the theorem to piecewise-linear differential equations we obtain Theorem 1 in \cite{plinear}; by restricting the theorem to Boolean networks and Hill functions we obtain Theorem 2 in \cite{BtoC}. 

The following corollary states that the set where we might not have a one-to-one correspondence can be made as small as possible.

\begin{corollary}\label{cor:01minusK}
Suppose that condition (1) from Theorem \ref{thm:main} is satisfied and consider $\epsilon>0$. 
There exists $K\subseteq [0,1]^N$ with $\mu([0,1]^N\setminus K)\leq\epsilon$ ($\mu=$Lebesgue measure) such that for $n$ large enough there is a one-to-one correspondence between steady states in $K$ of the continuous and discrete network.
\end{corollary}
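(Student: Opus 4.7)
My plan is to apply Theorem \ref{thm:main} to a carefully chosen collection of convex compact subsets $K_x$ that fill up nearly all of each region $\pmb{[}x\pmb{]}$. Since the boundaries between regions (the union of hyperplanes $\{\x_i = k^i_j\}$) form a set of Lebesgue measure zero, it suffices to shrink each open/half-open box $\pmb{[}x\pmb{]}$ inward by a small margin $\delta>0$ to obtain a closed box $K_x$, and to check that $\delta$ can be picked small enough that the total volume lost is at most $\epsilon$ while condition (1) still propagates correctly.

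Concretely, I would define $K_x$ as the product, over coordinates $i=1,\ldots,N$, of the intervals $[0,k^i_1-\delta]$ when $x_i=0$, of $[k^i_j+\delta,k^i_{j+1}-\delta]$ when $x_i=j$ with $0<j<m_i$, and of $[k^i_{m_i}+\delta,1]$ when $x_i=m_i$. Each $K_x$ is convex and compact, and $K_x\subseteq \pmb{[}x\pmb{]}$ provided $\delta$ is smaller than half the minimum gap $k^i_{j+1}-k^i_j$. Because $\mathcal{S}$ is finite and each $L_x$ lies in the open/half-open region $\pmb{[}f(x)\pmb{]}$, the finitely many coordinates $(L_x)_i$ all lie strictly inside their respective threshold intervals (where ``strictly inside'' is read in the subspace topology of $[0,1]$, so that $0$ and $1$ are allowed); thus, shrinking $\delta$ further if necessary ensures $L_x$ is an interior point of $K_{f(x)}$ relative to $[0,1]^N$ for every $x\in\mathcal{S}$ simultaneously. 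This is exactly the hypothesis required by Theorem \ref{thm:main}.

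For the measure estimate, write $K=\bigcup_{x\in\mathcal{S}}K_x$ and note that $[0,1]^N\setminus K$ is contained in the union of the zero-measure hyperplane set $B=\bigcup_{i,j}\{\x:\x_i=k^i_j\}$ together with $\bigcup_{x\in\mathcal{S}}(\pmb{[}x\pmb{]}\setminus K_x)$. A direct bound gives $\mu(\pmb{[}x\pmb{]}\setminus K_x)\leq C_x\,\delta$ for a constant $C_x$ depending only on the thresholds and $N$, so summing over the finite set $\mathcal{S}$ and choosing $\delta$ small enough yields $\mu([0,1]^N\setminus K)\leq\epsilon$. With this $K$ the hypotheses of Theorem \ref{thm:main} are fulfilled, so for $n$ large enough its conclusion provides the desired one-to-one correspondence between the steady states of $f$ and those of $\x'=D(\f^n(\x)-\x)$ lying in $K$.

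The only genuine subtlety is handling points $L_x$ whose coordinates equal $0$ or $1$: one must remember that ``interior'' is taken in the subspace topology of $[0,1]^N$, otherwise the construction would fail at corners. Once that point is noted, everything else is a routine shrink-and-count argument made clean by the finiteness of $\mathcal{S}$.
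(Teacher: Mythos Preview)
Your argument is correct and is exactly the intended one: shrink each region $\pmb{[}x\pmb{]}$ to a closed sub-box $K_x$ so that the hypotheses of Theorem~\ref{thm:main} are met and the total volume lost is at most $\epsilon$. The paper in fact gives no explicit proof of this corollary, treating it as an immediate consequence of Theorem~\ref{thm:main}; your write-up supplies precisely the routine details (finiteness of $\mathcal{S}$, the subspace-topology caveat for coordinates equal to $0$ or $1$, and the linear-in-$\delta$ volume loss) that the paper leaves implicit.
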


Another implication of Theorem \ref{thm:main} is that the steady states of an ODE $\x'=D(\f(x)-\x)$ are located either near $L_x$ or near the thresholds. This approach was used in \cite{mendoza} to estimate the stable steady states of an ODE model for Th-cell differentiation using as a starting point the steady states of a discrete model. Our results support this heuristic approach.

It is important to mention that although Theorem \ref{thm:main} indicates that $n$ has to be large, which would be meaningless for real parameters in biological regulation, the conditions for Lemma \ref{lemma:existence}, \ref{lemma:uniqueness}, \ref{lemma:stability} can be satisfied in practice for low values of $n$. For instance, it turns out that the conclusion of Theorem \ref{thm:main} holds for values of $n_i$ as low as $2$ (see Figure \ref{fig:dyn}).

In other words, a continuous function can be sigmoidal enough for biological meaningful parameters. This can explain why many continuous and discrete models of biological systems have similar behaviour \cite{p53sode,p53lm,vondassow,albert, santillan2008, Velizlacop}. This also supports the conjecture that the dynamics of biological systems strongly depend on the ``logic'' of the regulation and not on the exact kinetic parameters, \cite{thomasbook,albert}; in particular, the dynamical behaviour of biological systems is very robust to changes in the parameters.

\section{Application}
\label{sec:app}

In this section we show how our results can be used to gain understanding on how the dynamics of a continuous model depends on the wiring diagram. Other applications of results relating discrete and continuous models have been shown in \cite{BtoC,mendoza}.

It is a well known fact that the topology of a network can constrain its dynamics. For example, it has been shown that positive feedback loops are responsible for multistationarity \cite{soule,jacobianbn,Richard20093281}.

We now will use our results to apply a theorem about MN to ODEs. First we need the following terminology.
Consider a (signed directed) graph, $G=(V_G,E_G)$ with vertices $V_G$ and edges $E_G$. A \textit{positive feedback vertex set} (PFVS) is a set $P\subset V_G$ such that it intersects all positive feedback loops. In \cite{Aracena_FB_BRN} and \cite{Richard20093281} the authors proved the following theorem for Boolean networks and MN, respectively.

\begin{theorem}\label{thm:PFVS} Let $f:\mathcal{S}=\prod_{j=1}^N \lb 0,m_j\rb \rightarrow\mathcal{S}$ be a MN and let $P$ be a PFVS. Then, the number of steady states is bounded by $\prod_{i\in P}(m_i+1)$. Notice that in the Boolean case we have the bound $2^{|P|}$.
\end{theorem}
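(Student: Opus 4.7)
The plan is a divide-and-conquer on the values the PFVS $P$ can take at a steady state. For each assignment $a=(a_i)_{i\in P}\in\prod_{i\in P}\lb 0,m_i\rb$, I would define the restricted network $g_a:\prod_{j\notin P}\lb 0,m_j\rb\to\prod_{j\notin P}\lb 0,m_j\rb$ by $(g_a(y))_j=f_j(x)$, where $x$ is the tuple with $x_i=a_i$ for $i\in P$ and $x_j=y_j$ for $j\notin P$. Any steady state $x^{\ast}$ of $f$ determines an assignment $a(x^{\ast})=(x^{\ast}_i)_{i\in P}$ together with a steady state $(x^{\ast}_j)_{j\notin P}$ of $g_{a(x^{\ast})}$, and distinct steady states of $f$ sharing the same $a$-value yield distinct steady states of $g_a$. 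Consequently, the total count of steady states of $f$ is at most $\sum_a \#\{\text{steady states of }g_a\}$.

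The crucial structural observation is that the wiring diagram of $g_a$ is contained in the subgraph of the wiring diagram of $f$ induced on $V_G\setminus P$. Indeed, fixing the $P$-coordinates at $a$ does not create a dependence of $f_j$ on a new $x_i$, and it preserves monotonicity of $f_j$ in $x_i$ whenever $i,j\notin P$, so edge signs are preserved. Because $P$ is a PFVS, this induced subgraph contains no positive feedback loop, and therefore neither does the wiring diagram of $g_a$.

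The final step is to invoke as a black box the theorem of Aracena (for Boolean networks) and Richard (for general multivalued networks) stating that a MN whose wiring diagram contains no positive cycle has at most one fixed point. Applied to each restricted network $g_a$, this gives at most one steady state per assignment; summing over the $\prod_{i\in P}(m_i+1)$ assignments yields the claimed bound, which collapses to $2^{|P|}$ in the Boolean case $m_i=1$.

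The main obstacle is the black-box theorem itself: its proof is nontrivial, relying on a sign-based discrete analogue of Thomas's conjecture or on an induction that exploits a sink in the condensation of the interaction graph. Everything else in the argument is essentially bookkeeping, provided one checks the mild point that restricting variables can only delete edges and cycles, never create new ones.
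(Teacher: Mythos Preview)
The paper does not actually prove this theorem. It is stated there as a known result, attributed to Aracena for the Boolean case and to Richard for the general multivalued case, and then used as a black box to derive the subsequent corollary about continuous systems. So there is no ``paper's own proof'' to compare against.

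That said, your proposal is a correct proof sketch and is in fact the standard argument behind the cited results: freeze the coordinates in $P$, observe that the reduced network on the complement has an interaction graph with no positive cycle, and then invoke the discrete Thomas-type theorem that such a network has at most one fixed point. Your bookkeeping (that restriction can only delete edges and preserves signs, so no new positive cycles are created) is exactly what is needed to make the reduction go through. You are also right that the heavy lifting lies entirely in the black-box uniqueness theorem; the rest is a straightforward counting argument. In short, you have supplied more than the paper does, and what you supplied is sound.
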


Combining our results and Theorem \ref{thm:PFVS} we easily obtain the following result.

\begin{theorem}
Under the assumptions of Theorem \ref{thm:main} and for $n$ large enough, the number of steady states in $K$ of $\x'=D(\f^n(\x)-\x)$ is at most $\prod_{i\in P}(m_i+1)$; where $P$ is a PFVS of the wiring diagram and the set $[0,1]^N\setminus K$ can be made as small as required.
\end{theorem}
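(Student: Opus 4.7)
The plan is to obtain this result as a direct composition of three pieces already in the paper: Theorem \ref{thm:main}, Corollary \ref{cor:01minusK}, and Theorem \ref{thm:PFVS}. The overall strategy is transport: bound the number of ODE steady states in $K$ by bounding the number of MN steady states to which they biject, and then bound the latter using the PFVS inequality.

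First I would invoke Corollary \ref{cor:01minusK} to choose, for any prescribed $\epsilon>0$, a set $K\subseteq [0,1]^N$ of the form $K=\bigcup_{x\in\mathcal{S}}K_x$ (with the $K_x$ as required by Theorem \ref{thm:main}) such that $\mu([0,1]^N\setminus K)\leq\epsilon$. Next, for $n$ large enough, Theorem \ref{thm:main} provides a one-to-one correspondence between steady states of the ODE $\x'=D(\f^n(\x)-\x)$ lying in $K$ and fixed points of $f$: each $K_x$ contains an ODE steady state precisely when $f(x)=x$. Hence
\[
\#\{\text{steady states of the ODE in }K\}=\#\{x\in\mathcal{S}:f(x)=x\}.
\]
Finally, Theorem \ref{thm:PFVS} bounds the right-hand side by $\prod_{i\in P}(m_i+1)$ for any PFVS $P$ of the wiring diagram of $f$, which gives the desired inequality.

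The only genuine subtlety is the phrase \emph{the wiring diagram}: the statement is ambiguous between the wiring diagram of $\f^n$ and that of $f$. I would resolve this by observing that under hypothesis (1) of Theorem \ref{thm:main}, the qualitative structure of $\f^n$ on the regions $\pmb{[}x\pmb{]}$ is encoded by $f$, so every edge present in the discrete wiring diagram is also present in the continuous one (for $n$ large), and in particular any PFVS of the continuous wiring diagram is a PFVS of the discrete one. Thus the same $P$ serves both and the bound passes through.

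I expect no hard step here — everything is bookkeeping on top of previously established results. The one place where care is needed is the order of quantifiers: the $n_0$ required by Theorem \ref{thm:main} depends on the chosen $K$, which in turn depends on $\epsilon$; so the correct statement is ``given $\epsilon>0$, choose $K$ first by Corollary \ref{cor:01minusK}, then choose $n$ large depending on $K$,'' and this is exactly the order in which the proof should be written.
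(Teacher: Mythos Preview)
Your proposal is correct and follows exactly the route the paper intends: the paper gives no explicit proof, merely stating that the result follows by ``combining our results and Theorem~\ref{thm:PFVS},'' and your argument is precisely that combination (Corollary~\ref{cor:01minusK} for the smallness of $[0,1]^N\setminus K$, Theorem~\ref{thm:main} for the bijection, Theorem~\ref{thm:PFVS} for the bound). Your discussion of the wiring-diagram ambiguity and the quantifier order is more careful than anything the paper itself provides.
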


\begin{example}
Consider the differential equation $\x'=D(\f(\x)-\x)$ where $D$ is a positive diagonal matrix and $f$ is given by:\\
$
\begin{array}{llllllll}
\f_1 = \frac{.5^{n_1}}{.5^{n_1}+\x_4^{n_1}} & ,& \f_2 = \frac{.5^{n_2}}{.5^{n_2}+\x_1^{n_2}} \frac{\x_3^{n_3}}{.5^{n_3}+\x_3^{n_3}} & ,& \f_3 = \frac{.5^{n_4}}{.5^{n_4}+\x_6^{n_4}}\\
\f_4 = \frac{.5^{n_5}}{.5^{n_5}+\x_5^{n_5}} & , & \f_5 = \frac{.5^{n_6}}{.5^{n_6}+\x_2^{n_6}} \frac{\x_7^{n_7}}{.5^{n_7}+\x_7^{n_7}} \frac{\x_8^{n_8}}{.5^{n_8}+\x_8^{n_8}}& ,
 & \f_6 = \frac{\x_5^{n_9}}{.5^{n_9}+\x_5^{n_9}}\\
\f_7 = \frac{.5^{n_{10}}}{.5^{n_{10}}+\x_4^{n_{10}}}\frac{\x_5^{n_{11}}}{.5^{n_{11}}+\x_5^{n_{11}}} & , & \f_8 = \frac{\x_7^{n_{12}}}{.5^{n_{12}}+\x_7^{n_{12}}}\frac{.5^{n_{12}}}{.5^{n_{12}}+\x_9^{n_{12}}}& , 
& \f_9 = \frac{.5^{n_{13}}}{.5^{n_{13}}+\x_6^{n_{13}}}
\end{array}
$
\end{example}

\begin{figure}[here]
\centerline{
\framebox{ \includegraphics[height=3cm]{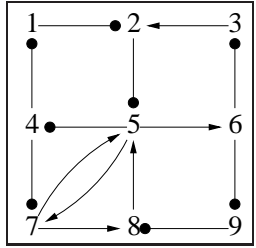}}
}
\caption{Wiring diagram of $\f$ in Example \ref{eg:toy2}.}
\label{fig:wd_PFVS}
\end{figure}

The wiring diagram is shown in Figure \ref{fig:wd_PFVS}. Similarly to Example \ref{eg:toy2}, it is not difficult to check that $\f$ has the same qualitative properties as a Boolean function (the thresholds are $.5$ in this case). Also, it is easy to check that the set $P=\{5\}$ is a PFVS. Therefore, by Theorem \ref{thm:PFVS}, for $n$ large enough we have at most $2^{|P|}=2$ stable steady states in $K$.

\section{Discussion}

The problem of relating continuous and discrete models has been studied by several authors, \cite{kauffman-glass73,BtoC,mendoza,plinear}. Previous results focused on piecewise linear and Boolean functions. We have shown that for sigmoidal ODEs, a steady state in the discrete model gives rise to a steady state in the continuous model; furthermore, this steady state is unique and asymptotically stable. Our results generalize previous results \cite{kauffman-glass73,BtoC,mendoza,plinear}.

One application of our results is the ability to extend the applicability of tools about discrete models to continuous models as shown in Section \ref{sec:app}. The problem of relating network topology to dynamics has been studied extensively for discrete models; those results can give new insight on how network topology constrains the dynamics in continuous models. 

A natural question arising from our work is whether or not one can obtain similar results about periodic solutions. For some biological systems and in very particular cases it has been shown that continuous and discrete models produce similar periodic behaviour \cite{glass75,glass10,p53sode,p53lm,plinear}. General results in this direction would increase our understanding of the relationship of continuous and discrete models, how the dynamical properties are constrained by topological features of the wiring diagram and how biology works at the system level. This deserves further investigation.


\begin{thebibliography}{10}

\bibitem{p53lm}
W.~Abou-Jaoud{\'e}, D.~Ouattara, and M.~Kaufman.
\newblock From structure to dynamics: Frequency tuning in the p53-mdm2 network:
  I. logical approach.
\newblock {\em Journal of Theoretical Biology}, 258(4):561 -- 577, 2009.

\bibitem{p53sode}
W.~Abou-Jaoud{\'e}, D.~Ouattara, and M.~Kaufman.
\newblock From structure to dynamics: Frequency tuning in the p53--{M}dm2
  network {II}. {D}ifferential and stochastic approaches.
\newblock {\em J. Theor. Biol.}, 264(4):1177--1189, 2010.

\bibitem{albert}
R.~Albert and H.~Othmer.
\newblock The topology of the regulatory interactions predicts the expression
  pattern of the segment polarity genes in \textit{Drosophila melanogaster}.
\newblock {\em J. Theor. Biol.}, 223(1):1--18, 2003.

\bibitem{Aracena_FB_BRN}
J.~Aracena.
\newblock Maximum number of fixed points in regulatory {B}oolean networks.
\newblock {\em Bulletin of Mathematical Biology}, 70(5):1398--1409, 2008.

\bibitem{p53ode}
A.~Ciliberto, B.~Novak, and J.~Tyson.
\newblock Steady states and oscillations in the p53/{M}dm2 network.
\newblock {\em Cell Cycle}, 4(3):488--493, 2005.

\bibitem{glass75}
L.~Glass.
\newblock Classification of biological networks by their qualitatively
  dynamics.
\newblock {\em J. Theor. Biol.}, 54:85--107, 1975.

\bibitem{glass10}
L.~Glass and H.~Siegelmann.
\newblock Logical and symbolic analysis of robust biological dynamics.
\newblock {\em Curr. Opin. Genet. Dev.}, 20(6):644--649, 2010.

\bibitem{kauffman-glass73}
S.~Kauffman and K.~Glass.
\newblock The logical analysis of continuous, nonlinear biochemical control
  networks.
\newblock {\em J. Theor. Biol.}, 39:103--129, 1973.

\bibitem{mendoza_Tcell}
L.~Mendoza.
\newblock A network model for the control of the differentiation process in
  {T}h cells.
\newblock {\em Biosystems}, 84:101--114, 2006.

\bibitem{mendoza}
L.~Mendoza and I.~Xenarios.
\newblock A method for the generation of standardized qualitative dynamical
  systems of regulatory networks.
\newblock {\em Theoretical Biology and Medical Modelling}, 3(13):1--18, 2006.

\bibitem{jacobianbn}
E.~Remy, P.~Ruet, and D.~Thieffry.
\newblock Graphic requirements for multistability and attractive cycles in a
  {B}oolean dynamical framework.
\newblock {\em Adv. Appl. Math.}, 41(3):335--350, 2008.

\bibitem{Richard20093281}
A.~Richard.
\newblock Positive circuits and maximal number of fixed points in discrete
  dynamical systems.
\newblock {\em Discrete Applied Mathematics}, 157(15):3281 -- 3288, 2009.

\bibitem{santillan2008}
M.~Santill{\'a}n.
\newblock Bistable behavior in a model of the \textit{lac} operon in
  \textit{Escherichia coli} with variable growth rate.
\newblock {\em Biophysical Journal}, 94(6):2065--2081, 2008.

\bibitem{plinear}
E.~Snoussi.
\newblock Qualitative dynamics of piecewise differential equations: a discrete
  mapping approach.
\newblock {\em Dynamics and Stability of Systems}, 4(3):189--207, 1989.

\bibitem{soule}
C.~{Soule}.
\newblock Graphic requirements for multistationarity.
\newblock {\em {C}om{P}lex{U}s}, 1:123--133, 2003.

\bibitem{thomasbook}
R.~Thomas.
\newblock {\em Biological Feedback}.
\newblock CRC, 1990.

\bibitem{Velizpoly}
A.~Veliz-Cuba, A.~Jarrah, and R.~Laubenbacher.
\newblock Polynomial algebra of discrete models in systems biology.
\newblock {\em Bioinformatics}, 26(13):1637--1643, 2010.

\bibitem{Velizlacop}
A.~Veliz-Cuba and B.~Stigler.
\newblock Boolean models can explain bistability in the \textit{lac} operon.
\newblock {\em J. Comput. Biol.}, 18(6):783--794, 2011.

\bibitem{vondassow}
G.~{von Dassow}, E.~Meir, E.~Munro, and G.~Odell.
\newblock The segment polarity network is a robust developmental module.
\newblock {\em Nature}, 406(6792):188--192, 2000.

\bibitem{BtoC}
D.~Wittmann, J.~Krumsiek, J.~Saez-Rodriguez, D.~Lauffenburger, S.~Klamt, and
  F.~Theis.
\newblock Transforming {B}oolean models to continuous models: methodology and
  application to {T}-cell receptor signaling.
\newblock {\em BMC Systems Biology}, 3(1):98, 2009.

\end{thebibliography}
\end{document}